\definecolor{dblue}{rgb}{0,0,0.70}
\definecolor{dgreen}{rgb}{0,0.60,0}
\newcommand{\AC}{\mathsf{AC}}
\newcommand{\DC}{\mathsf{DC}}
\newcommand{\GCH}{\mathsf{GCH}}
\newcommand{\HS}{\mathsf{HS}}
\newcommand{\SVC}{\mathsf{SVC}}
\newcommand{\WO}{\mathsf{WO}}
\newcommand{\ZF}{\mathsf{ZF}}
\newcommand{\ZFC}{\mathsf{ZFC}}
\DeclareMathOperator{\Add}{Add}
\DeclareMathOperator{\Aut}{Aut}
\DeclareMathOperator{\dom}{dom}
\DeclareMathOperator{\id}{id}
\DeclareMathOperator{\Ord}{Ord}
\DeclareMathOperator{\Proj}{Proj}
\DeclareMathOperator{\supp}{supp}
\DeclareMathOperator{\sym}{sym}
\newcommand{\bbP}{\mathbb{P}}
\newcommand{\Q}{\mathbb{Q}}
\newcommand{\calM}{\mathcal{M}}
\newcommand{\calN}{\mathcal{N}}
\newcommand{\sF}{\mathscr{F}}
\newcommand{\sG}{\mathscr{G}}
\newcommand{\power}{\mathscr{P}}
\newcommand{\dde}{\dot{e}}
\newcommand{\ddf}{\dot{f}}
\newcommand{\ddm}{\dot{m}}
\newcommand{\ddx}{\dot{x}}
\newcommand{\ddy}{\dot{y}}
\newcommand{\ddV}{\dot{V}}
\newcommand{\ddX}{\dot{X}}
\newcommand{\PGF}{\left\langle\mathbb{P},\mathscr{G},\mathscr{F}\right\rangle}
\newcommand{\ab}{{\alpha,\beta}}
\newcommand{\gab}{{\left\langle\alpha,\beta\right\rangle}}
\newcommand{\aba}[1]{{\alpha#1,\beta#1}}
\newcommand{\gaba}[1]{{\left\langle\alpha#1,\beta#1\right\rangle}}
\newcommand{\abg}{{\alpha,\beta,\gamma}}
\newcommand{\gabg}{{\left\langle\alpha,\beta,\gamma\right\rangle}}
\newcommand{\gabga}[1]{{\left\langle\alpha#1,\beta#1,\gamma#1\right\rangle}}
\newcommand{\abs}[1]{\left|#1\right|}
\newcommand{\gen}[1]{\left\langle#1\right\rangle}
\newcommand{\tup}[1]{\langle#1\rangle}
\newcommand{\comp}{\mathrel{\|}}
\newcommand{\forces}{\mathrel{\Vdash}}
\newcommand{\res}{\nobreak\mskip2mu\mathpunct{}\nonscript
  \mkern-\thinmuskip{\upharpoonright}\mskip6muplus1mu\relax} 
\renewcommand{\leq}{\leqslant}
\renewcommand{\nleq}{\nleqslant}
\renewcommand{\geq}{\geqslant}
\newcommand{\1}{\mathds{1}}
\newcommand*{\defeq}{\mathrel{\vcenter{\baselineskip0.5ex \lineskiplimit0pt
                     \hbox{\scriptsize.}\hbox{\scriptsize.}}}%
                     =}
\newcommand{\vphi}{\varphi}
\newtheoremstyle{boldrk}
  {\topsep}{\topsep}
  {}{}
  {\bfseries}{.}
  {5pt plus 1pt minus 1pt}{}
\theoremstyle{plain}
\newtheorem{thm}{Theorem}[section]
\newtheorem*{mainthm}{Main Theorem}
\newtheorem{lem}[thm]{Lemma}
\newtheorem{prop}[thm]{Proposition}
\newtheorem{claim}{Claim}[thm] 
\theoremstyle{boldrk}
\newtheorem{qn}[thm]{Question}
\theoremstyle{definition}
\newtheorem{defn}[thm]{Definition}
\newenvironment*{poc}[1][Proof of Claim]{\begin{proof}[#1]}{\end{proof}} 
\title[Hartogs and Lindenbaum]{Which pairs of cardinals can be\linebreak Hartogs and Lindenbaum numbers of a set?}
\author{Asaf Karagila}
\author{Calliope Ryan-Smith}
\email{karagila@math.huji.ac.il}
\urladdr{https://karagila.org}
\email{c.Ryan-Smith@leeds.ac.uk}
\urladdr{https://academic.calliope.mx}
\address{School of Mathematics, University of Leeds, LS2 9JT, UK}
\date{\today}
\keywords{Symmetric extensions, iterated forcing, iterated symmetries, symmetric iteration, axiom of choice, Hartogs number, Lindenbaum number.}
\thanks{
	No data are associated with this article.
	The first author was supported by a UKRI Future Leaders Fellowship [MR/T021705/2].
	The second author's work was financially supported by EPSRC via the Mathematical Sciences Doctoral Training Partnership [EP/W523860/1].
	For the purpose of open access, the author has applied a Creative Commons Attribution (CC-BY) licence to any Author Accepted Manuscript version arising from this submission.
}
\subjclass[2020]{Primary: 03E25; Secondary 03E35}
\begin{document}

\begin{abstract}
Given any $\lambda\leq\kappa$, we construct a symmetric extension in which there is a set $X$ such that $\aleph(X)=\lambda$ and $\aleph^*(X)=\kappa$. Consequently, we show that $\ZF+$``For all pairs of infinite cardinals $\lambda\leq\kappa$ there is a set $X$ such that $\aleph(X)=\lambda\leq\kappa=\aleph^*(X)$'' is consistent.
\end{abstract}

\maketitle

\section{Introduction}

The Axiom of Choice is one of the most successful axioms in modern mathematics, generating both many applications as well as several ``paradoxes'' (or rather counterintuitive surprises). One of its famous equivalents is Zermelo's theorem stating that every set can be well-ordered. Therefore, if the Axiom of Choice fails in a universe of set theory, some sets cannot be well-ordered. Nevertheless, we can still consider two ways in which a set $X$ is ``large'', by asking how large are well-orderable subsets of $X$ and how large are well-orderable partitions of $X$.

For example, if a set $X$ can be mapped onto $\omega_{13}$, then at the very least there is a sense in which it is large compared to $\omega$. Moreover, if we extend the universe so that $X$ can be well-ordered, then (at least) one of two scenarios must hold: (1) $X$ will have cardinality of at least $\aleph_{13}$; or (2) $\omega_{13}$ will be collapsed.

\begin{defn}
Let $X$ be a set. The \emph{Hartogs number} of $X$ is
\begin{equation*}
\aleph(X)\defeq\min\{\alpha\in\Ord\mid\text{ There is no injection }f\colon\alpha\to X\}.
\end{equation*}
The \emph{Lindenbaum number} of $X$ is
\begin{equation*}
\aleph^*(X)\defeq\min\{\alpha\in\Ord\backslash\{0\}\mid\text{There is no surjection }f\colon X\to\alpha\}.
\end{equation*}
\end{defn}
The existence of $\aleph(X)$ is guaranteed under $\ZF$ by Hartogs's lemma, from \cite{hartogs_uber_1915}.\footnote{One can also go to \cite[Theorem~8.18]{goldrei_classic_1998} for a proof in English.} Furthermore, $\aleph(X)$ must also be a cardinal number, and when $X$ is well-orderable, $\aleph(X)=\abs{X}^+$. The existence of $\aleph^*(X)$ is guaranteed under $\ZF$ by a lemma first used in the proof of Lindenbaum's theorem.\footnote{The theorem was first stated without proof in \cite[Th\'eor\`eme~82.$A_6$]{lindenbaum_communication_1926}. The first published proof is in \cite{sierpinski_sur-lindenbaum_1947}, or can be found in English in \cite[Ch.~XVI, Section~$3$, Theorem~1]{sierpinski_cardinal_1965}.} Again, $\aleph^*(X)$ must also be a cardinal number, and when $X$ is well-orderable, $\aleph^*(X)=\abs{X}^+$.

It is also not difficult to see that for any set $X$, $\aleph(X)\leq\aleph^*(X)$. However, it need not be the case that $\aleph=\aleph^*$. Indeed, in $\ZF$ the statement $(\forall X)\aleph(X)=\aleph^*(X)$ is equivalent to Axiom of Choice for well-ordered families of sets, established in \cite{pelc_on_1978} (this axiom is weaker than the Axiom of Choice in its general form). So, if the Axiom of Choice for well-ordered families fails, there is some $X$ such that $\aleph(X)<\aleph^*(X)$. We are concerned with a maximal possible violation of this principle.

\begin{mainthm}\label{thm:main-theorem}
$\ZF$ is equiconsistent with $\ZF+$``for all infinite cardinals $\lambda\leq\kappa$ there is a set $X$ such that $\aleph(X)=\lambda\leq\kappa=\aleph^*(X)$''.
\end{mainthm}

\subsection{Structure of the paper}
Section~\ref{s:preliminaries} establishes preliminaries for the paper, particularly our conventions for handling cardinalities, forcing, and symmetric extensions. Some time is also given to permutation groups. In Section~\ref{s:one-step} we show the consistency of the existence of a single set $X$ such that $\aleph(X)=\lambda$ and $\aleph^*(X)=\kappa$ for arbitrary infinite $\lambda\leq\kappa$. In Section~\ref{s:iteration} we use the machinery established in Section~\ref{s:one-step} to construct a class-sized product of notions of forcing to prove the main theorem.

\section{Preliminaries}\label{s:preliminaries}
\noindent Throughout this paper we work in $\ZFC$. Our treatment of forcing will be standard. By a \emph{notion of forcing} we mean a preordered set $\bbP$ with maximum element denoted $\1_\bbP$, or with the subscript omitted when clear from context. We write $q\leq p$ to mean that $q$ \emph{extends} $p$. Two conditions $p,p'$ are said to be \emph{compatible}, written $p\comp p'$, if they have a common extension. We follow Goldstern's alphabet convention so $p$ is never a stronger condition than $q$, etc.

When given a collection of $\bbP$-names, $\{\ddx_i\mid i\in I\}$, we will denote by $\{\ddx_i\mid i\in I\}^\bullet$ the canonical name this class generates: $\{\tup{\1,\ddx_i}\mid i\in I\}$. The notation extends naturally to ordered pairs and functions with domains in the ground model. An immediate application of this is a simplified definition of check names; given $x$, the check name for $x$ is defined inductively as $\check{x}=\{\check{y}\mid y\in x\}^\bullet$.

Given a set $X$, we denote by $\abs{X}$ its cardinal number. If $X$ can be well-ordered, then $\abs{X}$ is simply the least ordinal $\alpha$ such that a bijection between $\alpha$ and $X$ exists. Otherwise, we use the Scott cardinal of $X$, ${\{Y\in V_\alpha\mid\exists f\colon X\to Y\text{ a bijection}\}}$ with $\alpha$ taken minimal such that the set is non-empty. Greek letters, when used as cardinals, always refer to well-ordered cardinals. We call an ordinal $\alpha$ a cardinal if $\abs{\alpha}=\alpha$.

We write $\abs{X}\leq\abs{Y}$ to mean that there is an injection from $X$ to $Y$, and ${\abs{X}\leq^*\abs{Y}}$ to mean that there is a surjection from $Y$ to $X$ or that $X$ is empty. These notations extend to $\abs{X}<\abs{Y}$ (and $\abs{X}<^*\abs{Y}$) to mean that $\abs{X}\leq\abs{Y}$ (respectively $\abs{X}\leq^*\abs{Y}$) and there is no injection from $Y$ to $X$ (respectively no surjection from $X$ to $Y$). Finally, $\abs{X}=\abs{Y}$ means that there is a bijection between $X$ and $Y$.

Using this notation, one may redefine the Hartogs and Lindenbaum numbers as
\begin{align*}
\aleph(X)&\defeq\min\{\alpha\in\Ord\mid\abs{\alpha}\nleq\abs{X}\}\text{ and}\\
\aleph^*(X)&\defeq\min\{\alpha\in\Ord\mid\abs{\alpha}\nleq^*\abs{X}\}.
\end{align*}

\subsection{Symmetric extensions}
It is key to the role of forcing that if $V\vDash\ZFC$, and $G$ is $V$-generic for some notion of forcing $\bbP\in V$, then $V[G]\vDash\ZFC$. However, this demands additional techniques for trying to establish results that are inconsistent with $\AC$. Symmetric extensions expand the technique of forcing in this very way by constructing an intermediate model between $V$ and $V[G]$ that is a model of $\ZF$.

Given a notion of forcing $\bbP$, we shall denote by $\Aut(\bbP)$ the collection of automorphisms of $\bbP$. Let $\bbP$ be a notion of forcing and $\pi\in\Aut(\bbP)$. Then $\pi$ extends naturally to act on $\bbP$-names by recursion: $\pi\ddx=\{\tup{\pi p,\pi\ddy}\mid\tup{p,\ddy}\in\ddx\}$.

Due to the construction of the forcing relation from the notion of forcing, we end up with the following lemma, proved in \cite[Lemma~14.37]{jech_set_2003}.
\begin{lem}[The Symmetry Lemma]
Let $\bbP$ be a notion of forcing, $\pi\in\Aut(\bbP)$, and $\ddx$ a $\bbP$-name. Then $p\forces\vphi(\ddx)$ if and only if $\pi p\forces\vphi(\pi\ddx)$.\qed
\end{lem}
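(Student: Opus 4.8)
The plan is to prove this by induction on the complexity of the formula $\vphi$, with the base case of atomic formulas handled by a separate induction on the ranks of the names involved. Throughout, the only facts about $\pi$ that get used are that it is an order-automorphism of $\bbP$: hence $q\leq p\iff\pi q\leq\pi p$; the map $q\mapsto\pi q$ is a bijection of $\bbP$ onto itself carrying $\{q:q\leq p\}$ onto $\{q:q\leq\pi p\}$ and sending sets dense below $p$ to sets dense below $\pi p$; and, by recursion on name rank, $\ddx\mapsto\pi\ddx$ is a bijection of the class of all $\bbP$-names onto itself. It suffices to prove the forward implication ``$p\forces\vphi(\ddx)\Rightarrow\pi p\forces\vphi(\pi\ddx)$'' for every automorphism $\pi$, every $p$ and every $\ddx$, since applying this to $\pi^{-1}$, to the condition $\pi p$, and to the name $\pi\ddx$ yields the converse.

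For the atomic case one unwinds the recursive definition of the forcing relation for the formulas $\ddx\in\ddy$ and $\ddx=\ddy$ (equivalently, $\ddx\subseteq\ddy$), and shows by simultaneous induction on $\operatorname{rank}(\ddx)+\operatorname{rank}(\ddy)$ that replacing every name and every condition occurring in the defining clause for $p\forces\ddx\in\ddy$ by its $\pi$-image produces exactly the defining clause for $\pi p\forces\pi\ddx\in\pi\ddy$, and likewise for $=$. Concretely, in the standard formulation $p\forces\ddx\in\ddy$ holds iff the set $\{q:\exists\tup{r,\ddz}\in\ddy\ (q\leq r\wedge q\forces\ddx=\ddz)\}$ is dense below $p$; since $\tup{r,\ddz}\in\ddy\iff\tup{\pi r,\pi\ddz}\in\pi\ddy$, since $q\leq r\iff\pi q\leq\pi r$, since the induction hypothesis (these names have smaller rank) gives $q\forces\ddx=\ddz\iff\pi q\forces\pi\ddx=\pi\ddz$, and since $\pi$ carries sets dense below $p$ to sets dense below $\pi p$, the equivalence for $\in$ follows. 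The treatment of $p\forces\ddx=\ddy$ is the same, now invoking the already-established (lower-rank) equivalence for $\in$. This bookkeeping — checking that every quantifier over conditions and over pairs appearing in a name is transported correctly and that the rank genuinely drops — is the only real content of the argument, and is the step I expect to be the most delicate.

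The inductive step on formulas is then routine. Conjunction is immediate from the definition. For negation, $p\forces\neg\vphi(\ddx)$ iff no $q\leq p$ satisfies $q\forces\vphi(\ddx)$; by the induction hypothesis together with the bijection $q\mapsto\pi q$ between $\{q\leq p\}$ and $\{q\leq\pi p\}$, this holds iff no $q\leq\pi p$ satisfies $q\forces\vphi(\pi\ddx)$, i.e.\ iff $\pi p\forces\neg\vphi(\pi\ddx)$. For the existential quantifier, $p\forces\exists y\,\vphi(y,\ddx)$ iff the set $\{q:\exists\text{ a }\bbP\text{-name }\ddz,\ q\forces\vphi(\ddz,\ddx)\}$ is dense below $p$; applying the induction hypothesis and using that $\pi$ is a bijection on the class of names and carries sets dense below $p$ to sets dense below $\pi p$, this is equivalent to $\pi p\forces\exists y\,\vphi(y,\pi\ddx)$. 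This completes the induction and proves the lemma.

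Alternatively, one can argue semantically. The automorphism $\pi$ induces a bijection $G\mapsto\pi G\defeq\{\pi q:q\in G\}$ between the $V$-generic filters containing $p$ and those containing $\pi p$ (genericity is preserved because $\pi$ permutes the dense sets), and an easy induction on name rank gives $(\pi\ddx)_{\pi G}=\ddx_G$, whence $V[\pi G]=V[G]$. By the forcing theorem, $p\forces\vphi(\ddx)$ iff $V[G]\vDash\vphi(\ddx_G)$ for every generic $G\ni p$, iff $V[H]\vDash\vphi((\pi\ddx)_H)$ for every generic $H\ni\pi p$, iff $\pi p\forces\vphi(\pi\ddx)$. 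This route is shorter but presupposes the forcing theorem; the syntactic proof sketched above is the one to give if one wishes to develop the forcing relation and this lemma together, as in the cited source.
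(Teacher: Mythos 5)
Your proof is correct and is essentially the standard argument: the paper does not prove this lemma itself but cites \cite[Lemma~14.37]{jech_set_2003}, whose proof is exactly the induction on formula complexity (with the atomic cases handled by induction on name rank) that you describe. The reduction to one implication via $\pi^{-1}$ and the semantic alternative via $G\mapsto\pi G$ are both standard and sound.
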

\noindent Note in particular that for all $\pi\in\Aut(\bbP)$ we have $\pi\1=\1$. Therefore, $\pi\check{x}=\check{x}$ for all ground model sets $x$, and $\pi\{\ddx_i\mid i\in I\}^\bullet=\{\pi\ddx_i\mid i\in I\}^\bullet$, similarly extending to tuples, functions, etc.

Given a group $\sG$, a \emph{filter of subgroups} of $\sG$ is a set $\sF$ of subgroups of $\sG$ that is closed under supergroups and finite intersections. We say that $\sF$ is \emph{normal} if whenever $H\in\sF$ and $\pi\in\sG$, then $\pi H\pi^{-1}\in\sF$.

A \emph{symmetric system} is a triple $\PGF$ such that $\bbP$ is a notion of forcing, $\sG$ is a group of automorphisms of $\bbP$, and $\sF$ is a normal filter of subgroups of $\sG$. Given such a symmetric system, we say that a $\bbP$-name $\ddx$ is \emph{$\sF$-symmetric} if $\sym_\sG(\ddx)=\{\pi\in\sG\mid\pi\ddx=\ddx\}\in\sF$. $\ddx$ is \emph{hereditarily $\sF$-symmetric} if this notion holds for every $\bbP$-name hereditarily appearing in $\ddx$. We denote by $\HS_\sF$ the class of hereditarily $\sF$-symmetric names. When clear from context, we will omit subscripts and simply write $\sym(\ddx)$ or $\HS$. The following theorem, \cite[Lemma~15.51]{jech_set_2003}, is foundational to the study of symmetric extensions.
\begin{thm}
Let $\PGF$ be a symmetric system, $G\subseteq\bbP$ a $V$-generic filter, and let $\calM$ denote the class $\HS^G_\sF=\{\ddx^G\mid\ddx\in\HS_\sF\}$. Then $\calM$ is a transitive model of $\ZF$ such that $V\subseteq\calM\subseteq V[G]$.\hfil\qed
\end{thm}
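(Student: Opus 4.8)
The plan is to verify the four assertions separately, with almost all the work going into $\calM\vDash\ZF$. Transitivity is immediate from the definition of $\HS_\sF$: if $a=\ddx^G$ with $\ddx\in\HS_\sF$ and $b\in a$, then $b=\ddy^G$ for some $\ddy$ hereditarily appearing in $\ddx$, and every such $\ddy$ is again hereditarily symmetric, so $b\in\calM$. The inclusion $\calM\subseteq V[G]$ is trivial, since $\HS_\sF$ is a subclass of the $\bbP$-names. For $V\subseteq\calM$, I would note that $\pi\check x=\check x$ for every $\pi\in\sG$, so $\sym(\check x)=\sG\in\sF$; an induction on rank using $\check x=\{\check y\mid y\in x\}^\bullet$ then gives $\check x\in\HS_\sF$, whence $x=\check x^G\in\calM$.

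Among the axioms, Extensionality and Foundation hold in any transitive class. Pairing, Union, and Infinity I would dispatch with explicit symmetric names: for $a,b\in\calM$ with names $\ddx,\ddy$, the name $\{\ddx,\ddy\}^\bullet$ has stabiliser containing $\sym(\ddx)\cap\sym(\ddy)\in\sF$, lies in $\HS_\sF$ because $\ddx$ and $\ddy$ do, and interprets as $\{a,b\}$; Union is analogous, and $\check\omega$ witnesses Infinity. The substantive axioms are Separation (hence Replacement) and Power Set.

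The key tool for Separation is a symmetric forcing theorem: relativising the forcing relation to $\HS_\sF$ gives a relation $p\forces^\calM\vphi(\ddx_1,\dots,\ddx_n)$ that is definable in $V$, obeys the usual truth and definability lemmas for hereditarily symmetric names, and---crucially, via the Symmetry Lemma---satisfies $p\forces^\calM\vphi(\ddx_1,\dots,\ddx_n)$ if and only if $\pi p\forces^\calM\vphi(\pi\ddx_1,\dots,\pi\ddx_n)$. Granting this, fix $a=\ddx^G$, a formula $\vphi$, and parameters $\ddx_1,\dots,\ddx_n\in\HS_\sF$, and put $H=\sym(\ddx)\cap\bigcap_i\sym(\ddx_i)\in\sF$. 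I would then take
\[
\ddy=\bigl\{\tup{p,\dot z}\in\ddx\mid p\forces^\calM\vphi(\dot z,\ddx_1,\dots,\ddx_n)\bigr\}.
\]
For $\pi\in H$ the invariance clause shows that $\pi$ merely permutes the pairs defining $\ddy$, so $H\subseteq\sym(\ddy)\in\sF$; since every $\dot z\in\dom(\ddx)$ is hereditarily symmetric, so is $\ddy$, and $\ddy^G=\{z\in a\mid\calM\vDash\vphi(z,\ddx_1^G,\dots,\ddx_n^G)\}$. Replacement follows by the same scheme, together with a rank bound to collect the names of the images into a single symmetric name---the device also needed for Power Set.

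For Power Set, fix $a=\ddx^G$. Every subset of $a$ lying in $\calM$ has a ``nice'' name that is a subset of $\bbP\times\dom(\ddx)$, and all such names form one set $S\in V$; the hereditarily symmetric ones are $S\cap\HS_\sF$. Since any $\pi\in\sym(\ddx)$ permutes $\dom(\ddx)$ and preserves $\HS_\sF$, it maps $S\cap\HS_\sF$ to itself, so $\dot P=\{\dot z\mid\dot z\in S\cap\HS_\sF\}^\bullet$ satisfies $\sym(\ddx)\subseteq\sym(\dot P)\in\sF$. Then $\dot P^G$ contains every subset of $a$ in $\calM$, and $\power(a)\cap\calM$ is extracted from it by the Separation argument above. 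I expect the main obstacles to be setting up the symmetric forcing theorem---especially its invariance clause---and the nice-name bookkeeping for Power Set and Replacement; the remaining verifications are routine applications of the Symmetry Lemma.
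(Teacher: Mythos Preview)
The paper does not prove this theorem; it is quoted as \cite[Lemma~15.51]{jech_set_2003} and closed with a \qed. Your outline is essentially the standard proof found there, and the global strategy---transitivity from heredity, check names for $V\subseteq\calM$, the symmetric forcing relation for Separation, and a nice-name argument for Power Set---is correct.

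One step does fail as written. Your Separation name
\[
\ddy=\bigl\{\tup{p,\dot z}\in\ddx\mid p\forces^\calM\vphi(\dot z,\ddx_1,\dots,\ddx_n)\bigr\}
\]
only keeps a pair $\tup{p,\dot z}$ already in $\ddx$ when that same $p$ forces $\vphi$. This yields $\ddy^G\subseteq\{z\in a\mid\calM\vDash\vphi(z,\dots)\}$, but not the converse: if $\ddx=\{\tup{\1,\dot z}\}$ and only some $q<\1$ forces $\vphi(\dot z,\dots)$, then $\ddy=\emptyset$ even when $q\in G$ and $\dot z^G$ satisfies $\vphi$. The fix is to let $\ddy$ range over all of $\bbP\times\dom(\ddx)$, taking those $\tup{p,\dot z}$ with $p\forces^\calM\dot z\in\ddx\land\vphi(\dot z,\dots)$; your symmetry computation then goes through unchanged, since any $\pi\in H$ still permutes $\dom(\ddx)$ and respects $\forces^\calM$.
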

Finally, we have a forcing relation for symmetric extensions $\forces^\HS$ defined by relativising the forcing relation $\forces$ to the class $\HS$. This relation has the same properties and behaviour of the standard forcing relation $\forces$. Moreover, when $\pi\in\sG$, the Symmetry Lemma holds for $\forces^\HS$.

\subsection{Wreath products}
Frequently within this paper we will exhibit groups of automorphisms $\sG$ as permutation groups with an action on the notion of forcing. By a \emph{permutation group} (of the set $X$) we mean a subgroup of $S_X$, the group of bijections $X\to X$. If $\pi\in S_X$, then by the \emph{support} of $\pi$, written $\supp(\pi)$, we mean the set $\{x\in X\mid\pi(x)\neq x\}$. Given an infinite cardinal $\lambda$ we denote by $S_X^{{<}\lambda}$ the subgroup of $S_X$ of permutations $\pi$ such that $\abs{\supp(\pi)}<\lambda$.
\begin{defn}[Wreath product]
Given two permutation groups $G\leq S_X$ and $H\leq S_Y$, the \emph{wreath product} of $G$ and $H$, denoted $G\wr H$, is the subgroup of permutations $\pi\in S_{X\times Y}$ which have the following property:
\begin{quote}
There is $\pi^*\in G$ and a sequence $\gen{\pi_x\mid x\in X}\in H^X$ such that for all $\tup{x,y}\in X\times Y$, $\pi(x,y)=\tup{\pi^*(x),\pi_x(y)}$.
\end{quote}
That is, $\pi$ first permutes each column $\{x\}\times Y$ according to some $\pi_x\in H$, and then acts on the $X$ co-ordinate of $X\times Y$, permuting its columns via some $\pi^*\in G$.
\end{defn}
Given $\pi\in G\wr H$, we will use the notation $\pi^*$ and $\pi_x$ to mean the elements of $G$ and $H$ respectively from the definition. Note that if $\pi,\sigma\in G\wr H$, then $(\pi\sigma)^*=\pi^*\sigma^*$.

Note also that $\{\id\}\wr S_Y\leq S_{X\times Y}$ is the group of all $\pi\in S_{X\times Y}$ such that for all ${\tup{x,y}\in X\times Y}$, $\pi(x,y)\in\{x\}\times Y$.

\section{Realising a single pair as Hartogs and Lindenbaum of a set}\label{s:one-step}
\noindent Let us spend some time establishing the consistency and construction of a single set $X$ such that $\aleph(X)=\lambda$ and $\aleph^*(X)=\kappa$. The construction used here will then be iterated in Section~\ref{s:iteration} to prove our main theorem.

\begin{thm}\label{thm:one-step}
Let $\lambda\leq\kappa$ be infinite cardinals. There is a symmetric system $\PGF$ and a $\bbP$-name $\ddX\in\HS_\sF$ such that
\begin{equation*}
\1_\bbP\forces^\HS``\aleph(\ddX)=\check{\lambda}\text{ and }\aleph^*(\ddX)=\check{\kappa}".
\end{equation*}
\end{thm}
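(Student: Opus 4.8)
The plan is to build $X$ as a union of $\kappa$ many ``blocks'', each block being a copy of a set of $\lambda$-many ``generic points'' that are permuted so that no well-ordering of length $\lambda$ injects into any single block, while the block-structure itself is only a $\kappa$-indexed partition that admits no surjection onto $\kappa$ of longer length. Concretely, I would let $\bbP=\Add(\omega,\kappa\times\lambda)$ (or some finite-support product over $\kappa\times\lambda$ of a suitable atom-adding forcing), so that the generic gives us an array $\gen{a_{\gamma,\xi}\mid\gamma<\kappa,\xi<\lambda}$ of distinct generic reals (the ``atoms''). Set $A_\gamma=\{a_{\gamma,\xi}\mid\xi<\lambda\}^\bullet$ and $\ddX=\{A_\gamma\mid\gamma<\kappa\}^\bullet$. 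For the group, this is exactly where the wreath product from the preliminaries enters: take $\sG=S_\kappa^{{<}\kappa}\wr S_\lambda^{{<}\lambda}$ acting on $\kappa\times\lambda$ and hence on $\bbP$ and on names, where the outer group permutes blocks (moving fewer than $\kappa$ of them) and the inner groups permute points within each block (each moving fewer than $\lambda$ points). The normal filter $\sF$ is the one generated by pointwise stabilisers $\fix(E)$ for $E\subseteq\kappa\times\lambda$ with $\abs{E}<\lambda$ — i.e.\ small supports — which is normal because $\sG$ acts on $\kappa\times\lambda$ and small sets are preserved.

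Then I would carry out the verification in the following order. \textbf{Step 1: $\ddX,A_\gamma\in\HS$.} Each $A_\gamma$ is fixed by $\{\id\}\wr S_\lambda^{{<}\lambda}$ composed with the stabiliser of $\gamma$ in the outer coordinate, which contains a support-group, so $A_\gamma\in\HS$; and $\ddX$ is fixed by all of $\sG$, so $\ddX\in\HS$ trivially. \textbf{Step 2: $\aleph(\ddX)\leq\lambda$,} i.e.\ $\1\forces^\HS$ there is no injection $\check\lambda\to\ddX$. Suppose $p\forces^\HS\ddf\colon\check\lambda\to\ddX$ injective with $\ddf\in\HS$, and let $E$ be a support for $\ddf$ with $\abs E<\lambda$. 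Since the range lies in $\ddX$, below some $q\leq p$ we decide $\ddf(\check\xi)$ lands in some block $A_{\gamma_\xi}$; a counting/pigeonhole argument finds two values $\xi\neq\xi'$ with $\gamma_\xi=\gamma_{\xi'}=\gamma$ and with the decided atoms outside $E$, and then an inner permutation in $S_\lambda^{{<}\lambda}$ fixing $E$ but swapping the two relevant points within block $\gamma$ contradicts injectivity via the Symmetry Lemma. \textbf{Step 3: $\aleph(\ddX)\geq\lambda$,} i.e.\ for every $\mu<\lambda$ there \emph{is} an injection $\check\mu\to\ddX$: fix any block, say $A_0$, and inside it pick $\mu$ generic atoms with a support of size $\mu<\lambda$; the map $\xi\mapsto a_{0,\xi}$ is symmetric because its support is small, giving an injection $\mu\hookrightarrow A_0\subseteq\ddX$. \textbf{Step 4: $\aleph^*(\ddX)\geq\kappa$,} i.e.\ for every $\mu<\kappa$ there is a surjection $\ddX\to\check\mu$: send $A_\gamma$ (and each of its elements) to $\gamma$ if $\gamma<\mu$ and to $0$ otherwise; again the support is empty/small, so this is in $\HS$. \textbf{Step 5: $\aleph^*(\ddX)\leq\kappa$,} the crux: $\1\forces^\HS$ there is no surjection $\ddX\to\check\kappa$. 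Suppose $p\forces^\HS\ddf\colon\ddX\to\check\kappa$ is onto, with support $E$, $\abs E<\lambda\leq\kappa$. For each $\delta<\kappa$ choose $q_\delta\leq p$ and $x_\delta$ a name for an element of $\ddX$ with $q_\delta\forces\ddf(x_\delta)=\check\delta$; each $x_\delta$ lies in some block $A_{\gamma_\delta}$, and since $\abs E<\kappa$, for cofinally/unboundedly many $\delta$ the block $\gamma_\delta$ is ``new'' relative to $E$ and the finitely-many previously-used blocks. Using an outer permutation in $S_\kappa^{{<}\kappa}$ fixing $E$ that moves block $\gamma_\delta$ to another fresh block $\gamma_{\delta'}$, the Symmetry Lemma forces $\ddf$ to send that block to \emph{both} $\delta$ and $\delta'$ on a common extension of $q_\delta,q_{\delta'}$ — contradiction, once we note that conditions supported on disjoint fresh coordinates are compatible. (The only subtlety is that $\ddf$ is defined on $A_\gamma$'s elements, not on the $A_\gamma$'s directly, so one argues with a name for a member of $A_{\gamma_\delta}$ and uses the inner symmetry to homogenise $\ddf$'s value across the block first, reducing to the block level.)

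The main obstacle is \textbf{Step 5}, and within it the bookkeeping that makes the homogenisation legitimate: because elements of different blocks are indistinguishable under $\sG$ only up to a support of size ${<}\lambda$, and $\kappa$ can be much larger than $\lambda$, one must be careful that a single support $E$ of size ${<}\lambda<\kappa$ genuinely fails to ``pin down'' more than ${<}\lambda$-many of the blocks, so that among $\kappa$-many witnesses $x_\delta$ one can always find a pair sitting in blocks that are freely interchangeable by an element of $\sG$ fixing $E$ \emph{and} supported so as to keep the two chosen conditions compatible. This is a standard ``fresh block'' argument, but it is the place where the choice of the small-support filter (supports of size ${<}\lambda$, not ${<}\kappa$) is essential: a larger filter would let $\ddf$ see too many blocks and destroy Step~5, while this filter still permits Steps~3 and~4. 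I would also double-check that $\bbP$ together with $\sG,\sF$ forms a genuine symmetric system (normality of $\sF$ under conjugation by wreath elements) and that the wreath-product structure of $\sG$ is closed under the needed operations, which follows from the remark that $(\pi\sigma)^*=\pi^*\sigma^*$.
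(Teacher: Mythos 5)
Your overall architecture (a $\kappa\times\lambda$-indexed family of generic objects, an outer group permuting blocks and an inner group permuting within blocks, and symmetry arguments showing any symmetric $f\colon X\to\kappa$ takes few values while any symmetric $f\colon\lambda\to X$ has small range) matches the paper's. But there is a genuine gap in your choice of the normal filter, and it sits exactly at Step 4. You take $\sF$ to be generated by pointwise stabilisers of sets $E\subseteq\kappa\times\lambda$ with $\abs{E}<\lambda$. Now fix $\mu$ with $\lambda\leq\mu<\kappa$ and consider your proposed surjection $e_\mu$ sending $a_{\gamma,\xi}\mapsto\gamma$ for $\gamma<\mu$ (and to $0$ otherwise). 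Any $\pi\in\sym(e_\mu)$ must map each block $A_\gamma$ with $\gamma<\mu$ to itself setwise. But a set $E$ with $\abs{E}<\lambda$ meets fewer than $\lambda\leq\mu$ of the blocks, so its pointwise stabiliser contains a permutation swapping two entire blocks $A_{\gamma_1},A_{\gamma_2}$ with $\gamma_1\neq\gamma_2<\mu$ both disjoint from $E$; such a permutation does not fix $e_\mu$. Hence $\sym(e_\mu)$ contains no generator of your filter, $e_\mu\notin\HS$, and you cannot conclude $\aleph^*(X)\geq\kappa$. (Your remark that ``this filter still permits Steps 3 and 4'' is exactly where the argument breaks.) Enlarging the filter to pointwise stabilisers of sets of size ${<}\kappa$ rescues Step 4 but destroys Step 2, since injections of $\lambda$ into a single block then become symmetric.

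The paper threads this needle with a two-tier filter: the generating subgroups $H_{I,J,K}$ require the outer permutation to fix \emph{setwise} each of the ${<}\kappa$ many columns indexed by $I$ (that is, to lie in $\{\id\}\wr S_\lambda$ on $I\times\lambda$), while fixing \emph{pointwise} only the ${<}\lambda$-sized sets $J$ and $K$. Setwise stabilisation of many columns makes the surjections onto every $\mu<\kappa$ symmetric (they only need to know which column an element lies in), yet it still allows free permutation within each such column, which is what kills injections from $\lambda$. This distinction between setwise and pointwise stabilisation is the key idea your proposal is missing. Two smaller points: your $\ddX=\{A_\gamma\mid\gamma<\kappa\}^\bullet$ is the set of blocks, whereas Steps 2--3 treat the atoms $a_{\gamma,\xi}$ as the elements of $X$, so you presumably intend $X=\bigcup_\gamma A_\gamma$; and for the compatibility of $q$ with $\pi q$ after transposing two indices, the paper attaches a third coordinate ranging over a regular $\mu\geq\lambda$ together with an inner $S_\mu$ wreath factor so that supports can always be shifted apart, whereas with plain $\Add(\omega,\kappa\times\lambda)$ and a bare transposition this compatibility needs a separate (though standard) argument.
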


\begin{proof}
Let $\mu$ be a regular cardinal such that $\mu\geq\lambda$, and let $\bbP=\Add(\mu,\kappa\times\lambda\times\mu)$. That is, the conditions of $\bbP$ are partial functions $p\colon\kappa\times\lambda\times\mu\times\mu\to2$ such that $\abs{\dom(p)}<\mu$, with $q\leq p$ if $q\supseteq p$.

For $p\in\bbP$ and $A\subseteq\kappa\times\lambda\times\mu$, we will write $p\res A$ to mean $p\res A\times\mu$, and for $B\subseteq\kappa\times\lambda$ we will write $p\res B$ to mean $p\res B\times\mu\times\mu$. Furthermore, we shall write $\supp(p)$ to mean the projection of the domain of $p$ to its first three co-ordinates, so $\supp(p)\subseteq\kappa\times\lambda\times\mu$.

We define the following $\bbP$-names:
\begin{enumerate}
\item $\ddy_{\abg}\defeq\{\tup{p,\check{\delta}}\mid p\in\bbP,\delta<\mu,p(\abg,\delta)=1\}$;
\item $\ddx_{\ab}\defeq\{\ddy_{\abg}\mid\gamma\in\mu\}^\bullet$; and
\item $\ddX\defeq\{\ddx_{\ab}\mid\gab\in\kappa\times\lambda\}^\bullet$.
\end{enumerate}
In the extension, $\ddX$ will be the name for the set $X$ such that $\aleph(X)=\lambda$ and $\aleph^*(X)=\kappa$.

Let $\sG=S_{\kappa\times\lambda}^{{<}\lambda}\wr S_{\mu}$. That is to say, $\sG$ is the group of permutations $\pi$ in the wreath product $S_{\kappa\times\lambda}\wr S_\mu$ such that $\pi^*\in S_{\kappa\times\lambda}$ fixes all but fewer than $\lambda$-many elements of $\kappa\times\lambda$. $\sG$ acts on $\bbP$ via $\pi p(\pi(\abg),\delta)=p(\abg,\delta)$. Note that, for $\pi\in\sG$,
\begin{align*}
\pi\ddy_{\abg}&=\{\tup{\pi p,\pi\check{\delta}}\mid p\in\bbP,\delta<\mu,p(\abg,\delta)=1\}\\
&=\{\tup{\pi p,\check{\delta}}\mid p\in\bbP,\delta<\mu,\pi p(\pi(\abg),\delta)=1\}\\
&=\{\tup{p,\check{\delta}}\mid p\in\bbP,\delta<\mu,p(\pi(\abg),\delta)=1\}\\
&=\ddy_{\pi(\abg)}.
\end{align*}
Similar verification shows that $\pi\ddx_{\ab}=\ddx_{\pi^*(\ab)}$ and $\pi\ddX=\ddX$. When we have defined the filter of subgroups $\sF$ (which we shall do upon the conclusion of this sentence), it will be clear from these calculations that these names are hereditarily $\sF$-symmetric.

For $I\in[\kappa]^{{<}\kappa}$, $J\in[I\times\lambda]^{{<}\lambda}$, and $K\in[J\times\mu]^{{<}\lambda}$, let $H_{I,J,K}$ be the subgroup of $\sG$ given by those $\pi$ such that:
\begin{enumerate}
\item $\pi^*\res I\times\lambda\in\{\id\}\wr S_\lambda$;
\item $\pi^*\res J=\id$; and
\item $\pi\res K=\id$.
\end{enumerate}
That is, we are taking those $\pi\in S_{\kappa\times\lambda}^{{<}\lambda}\wr S_\mu$ such that $\pi^*$ fixes the columns taken from the set $I$ of cardinality less than $\kappa$, and fixes pointwise the set $J$ of cardinality less than $\lambda$. We then further require that $\pi$ fixes pointwise the set $K$ of cardinality less than $\lambda$.

Let $\sF$ be the filter of subgroups of $\sG$ generated by groups of the form $H_{I,J,K}$ for $I\in[\kappa]^{{<}\kappa}$, $J\in[I\times\lambda]^{{<}\lambda}$, and $K\in[J\times\mu]^{{<}\lambda}$.\footnote{Since $\bbP$ is $\lambda$-closed and $\sF$ is $\lambda$-complete, $\DC_{<\lambda}$ holds in the symmetric extension. A proof can be found in \cite[Lemma~1]{karagila_preserving_dc_2019}.} We shall refer to triples $I,J,K$ as being `appropriate' to mean that they satisfy these conditions.

By our previous calculations, $\pi\ddy_{\abg}=\ddy_{\abg}$ whenever $\pi(\abg)=\gabg$, so $\sym(\ddy_{\abg})\geq H_{\{\alpha\},\{\gab\},\{\gabg\}}\in\sF$. Similarly $\sym(\ddx_{\ab})\geq H_{\{\alpha\},\{\ab\},\emptyset}\in\sF$ and of course $\sym(\ddX)=\sG\in\sF$.

\begin{claim}
$\sF$ is normal. Hence $\gen{\bbP,\sG,\sF}$ is a symmetric system.
\end{claim}

\begin{poc}
Note that for appropriate $I,J,K$ and $I',J',K'$,
\begin{equation*}
H_{I,J,K}\cap H_{I',J',K'}=H_{I\cup I',J\cup J',K\cup K'}
\end{equation*}
and $I\cup I',J\cup J',K\cup K'$ is appropriate. Therefore, for all $H\leq\sG$, $H\in\sF$ if and only if there is appropriate $I,J,K$ such that $H\geq H_{I,J,K}$. Hence, to show that $\sF$ is normal, it is sufficient to show that for all appropriate $I,J,K$ and all $\pi\in\sG$, there is appropriate $I',J',K'$ such that $\pi H_{I,J,K}\pi^{-1}\geq H_{I',J',K'}$, or equivalently that $H_{I,J,K}\geq\pi^{-1}H_{I',J',K'}\pi$. Given such $I,J,K$ and $\pi$, we define
\begin{align*}
K'&=\pi``K\\
J'&=\Proj(K')\cup\supp(\pi^*)\cup J\cup\pi^*``J\\
I'&=\Proj(J')\cup I.
\end{align*}
We must first show that $I',J',K'$ is appropriate. Firstly, note that $\abs{K'}=\abs{K}<\lambda$, $\abs{J'}\leq2\cdot\abs{J}+\abs{\supp(\pi^*)}+\abs{K'}<\lambda$, and $\abs{I'}\leq\abs{I}+\abs{J'}<\kappa$ as required. Secondly, the inclusion of the projections in the definitions of $I',J',K'$ guarantee that they are appropriate. We claim that $H_{I',J',K'}$ is the required group. Let $\sigma\in H_{I',J',K'}$, then we must show that $\pi^{-1}\sigma\pi\in H_{I,J,K}$.

Firstly, for all $\gabg\in K$, $\pi(\abg)\in K'$, so ${\sigma(\pi(\abg))=\pi(\abg)}$ and hence $\pi^{-1}\sigma\pi(\abg)=\gabg$ as required.

Secondly, we claim that $(\pi^{-1}\sigma\pi)^*=\sigma^*$ and that this is sufficient. Indeed if this is the case then, since $J\subseteq J'$ and $I\subseteq I'$, we get that $\pi^{-1}\sigma\pi\in H_{I,J,K}$ as desired. Note also that since $\pi^*$ is a bijection, $\gab\in\supp(\pi^*)$ if and only if ${\pi^*(\ab)\in\supp(\pi^*)}$.

If $\pi^*(\ab)\neq\gab$ then both $\gab$ and $\pi^*(\ab)$ are in $\supp(\pi^*)\subseteq J'$, and thus ${(\sigma\pi)^*(\ab)=\pi^*(\ab)}$ and $\sigma^*(\ab)=\gab$. Hence ${(\pi^{-1}\sigma\pi)^*(\ab)=\gab}$ and ${\gab=\sigma^*(\ab)}$ as desired.

Finally we deal with the case $\pi^*(\ab)=\gab$. If $(\pi\sigma)^*(\ab)\neq\sigma^*(\ab)$, then $\sigma^*(\ab)\in\supp(\pi^*)\subseteq J'$, so ${(\sigma\sigma)^*(\ab)=\sigma^*(\ab)}$ and thus ${\sigma^*(\ab)=\gab}$. Hence, $\pi^*(\ab)\neq\gab$, and as before ${(\pi^{-1}\sigma\pi)^*(\ab)=\sigma^*(\ab)}$. Therefore, if $\pi^*(\ab)=\gab$ then we must have $(\pi\sigma)^*(\ab)=\sigma^*(\ab)$. $\pi^*(\ab)=\gab$, so $(\pi\sigma)^*(\ab)=(\sigma\pi)^*(\ab)$ and ${(\pi^{-1}\sigma\pi)^*(\ab)=\sigma^*(\ab)}$ as desired.
\end{poc}

\begin{claim}\label{claim:homogeneity}
Let $q\in\bbP$, $H=H_{I,J,K}\in\sF$, and $\gab,\gaba'\in\kappa\times\lambda$. The following are equivalent:
\begin{enumerate}
\item There is $\pi\in H$ such that $\pi^*$ is the transposition $\begin{pmatrix}\gab&\gaba'\end{pmatrix}$ and $\pi q\comp q$.
\item $\{\alpha,\alpha'\}\cap I\neq\emptyset\implies\alpha=\alpha'$, and

$\{\gab,\gaba'\}\cap J\neq\emptyset\implies\gab=\gaba'$.
\end{enumerate}
\end{claim}
\begin{poc}
($(1)\implies(2)$). By the definition of $H$, if there is such a $\pi\in H$ then Condition (2) must be satisfied.

($(2)\implies(1)$). If $\alpha,\alpha',\beta,\beta'$ satisfy Condition (2), then any $\pi\in\sG$ such that $\pi^*=\begin{pmatrix}\gab&\gaba'\end{pmatrix}$ is a candidate for an element of $H$ (as $\abs{\supp(\pi^*)}=2<\lambda$). Firstly, if $\gab=\gaba'$ then we may take $\pi=\id$, so assume otherwise. Let $A=\{\gamma\in\mu\mid\gabg\in\supp(q)\}$, and $B=\{\gamma'\in\mu\mid\gabga'\in\supp(q)\}$. Since $\abs{A},\abs{B}<\mu$, there is a permutation $\sigma$ of $\mu$ such that $\sigma``A\cap B=\emptyset$ and $A\cap\sigma``B=\emptyset$. Therefore, setting $\pi_{\ab}=\pi_{\aba'}=\sigma$ we will have that
\begin{align*}
\supp(q\res\gab)\cap\supp(\pi q\res\gab)&=\sigma``A\cap B=\emptyset\\
\intertext{and}
\supp(q\res\gaba')\cap\supp(\pi q\res\gaba')&=A\cap\sigma``B=\emptyset.
\end{align*}
Hence ${q\res\{\gab,\gaba'\}\comp \pi q\res\{\gab,\gaba'\}}$, and for all other $\gaba{''}$ we have ${q\res\gaba{''}}={\pi q\res\gaba{''}}$.
\end{poc}

The remainder of the proof will be spent showing that the name $\ddX$ will give us the object that we are searching for, that is $\1\forces^\HS\aleph(\ddX)=\check{\lambda}$ and $\1\forces^\HS\aleph^*(\ddX)=\check{\kappa}$. We shall first prove the inequalities $\1\forces^\HS\aleph(\ddX)\geq\check{\lambda}$ and $\1\forces^\HS\aleph^*(\ddX)\geq\check{\kappa}$, and then prove that they can be sharpened to equalities.

Towards the inequalities, for any $\alpha,\eta<\kappa$ let
\begin{equation*}
\iota_{\alpha,\eta}\defeq\begin{cases}
\alpha&\alpha<\eta\\
0&\text{Otherwise.}
\end{cases}
\end{equation*}
Then consider the name $\dde_\eta\defeq\{\tup{\ddx_{\ab},\check{\iota}_{\alpha,\eta}}^\bullet\mid\gab\in\kappa\times\lambda\}^\bullet$. Routine verification shows $\sym(\dde_\eta)\geq H_{\eta,\emptyset,\emptyset}$, so $\dde_\eta\in\HS$. Furthermore, ${\1\forces``\dde_\eta\colon\ddX\to\check{\eta}\text{ is surjective}"}$, and thus $\1\forces^\HS\aleph^*(\ddX)\geq\check{\kappa}$.

Similarly, for any $\eta<\lambda$ and any $\alpha\in\kappa$, take $\ddm_\eta\defeq\{\tup{\check{\beta},\ddx_{\ab}}^\bullet\mid\beta<\eta\}^\bullet$.  Routine verification shows that $\sym(\ddm_\eta)\geq H_{\{\alpha\},\{\alpha\}\times\eta,\emptyset}$, so $\ddm_\eta\in\HS$ as well. Furthermore, $\1\forces``\ddm_\eta\colon\check{\eta}\to\ddX$ is injective'', and thus $\1\forces^\HS\aleph(\ddX)\geq\check{\lambda}$.

It remains to show that these inequalities are, in fact, equalities, starting with $\aleph^*$. Suppose that $\ddf\in\HS$ and $p\forces\ddf\colon\ddX\to\check{\kappa}$. Let $H=H_{I,J,K}\leq\sym(\ddf)$. Then suppose that for some $q\leq p$ and $\gab\in\kappa\times\lambda$ there is $\eta$ such that $q\forces\ddf(\ddx_{\ab})=\check{\eta}$.

By Claim~\ref{claim:homogeneity}, if $\alpha\notin I$ then for any $\alpha'\notin I$ and any $\beta'\in\lambda$ there is $\pi\in H$ such that $\pi^*(\ab)=\gaba'$ and $\pi q\comp q$. Then $\pi q\forces\ddf(\ddx_{\aba'})=\check{\eta}$, so $q\cup\pi q\leq q$ forces that $\ddf(\ddx_{\ab})=\ddf(\ddx_{\aba'})$. Hence $p$ forces that $\ddf$ is constant outside of $I\times\lambda$.

If instead $\alpha\in I$ but $\gab\notin J$ then again by Claim~\ref{claim:homogeneity}, for any $\beta'\in\lambda$ such that $\tup{\alpha,\beta'}\notin J$ there is $\pi\in H$ such that $\pi^*(\ab)=\tup{\alpha,\beta'}$ and $\pi q\comp q$. Once again $\pi q\forces\ddf(\ddx_{\aba'})=\check{\eta}$, and so $p$ forces that in $(I\times\lambda)\backslash J$, the value of $\ddf(\ddx_{\ab})$ depends only on $\alpha$. This means that $\ddf$ can take only at most $\abs{J}+\abs{I}+1<\kappa$ many values, so cannot be a surjection, and thus $\1\forces^\HS\aleph^*(\ddX)=\check{\kappa}$.

Finally, suppose that $\ddf\in\HS$ and $p\forces\ddf\colon\check{\lambda}\to\ddX$. Let $H=H_{I,J,K}\leq\sym(\ddf)$. We shall show that $p\forces\ddf``\check{\lambda}\subseteq\{\ddx_{\ab}\mid\gab\in J\}^\bullet$, and hence $\ddf$ cannot be injective.

Suppose otherwise, that for some $q\leq p$, $\gab\notin J$, and $\eta<\lambda$ we have that ${q\forces\ddf(\check{\eta})=\ddx_{\ab}}$. Since $\gab\notin J$, for any $\beta'\in\lambda$ such that $\tup{\alpha,\beta'}\notin J$ there is ${\pi\in H}$ such that $\pi^*(\ab)=\tup{\alpha,\beta'}$ and $\pi q\comp q$. Since $\abs{J}<\lambda$ we may take $\beta'\neq\beta$, and so $\pi q\forces\ddf(\check{\eta})=\pi\ddx_{\ab}=\ddx_{\alpha,\beta'}$. Therefore, ${\pi q\cup q\forces\ddx_{\ab}=\ddf(\check{\eta})=\ddx_{\alpha,\beta'}}$, contradicting our assumption that $\beta'\neq\beta$. Thus our assertion is proved and ${\1\forces^\HS\aleph(\ddX)=\check{\lambda}}$.
\end{proof}

\section{Realising all pairs at once}\label{s:iteration}
\noindent We have now constructed enough technology to prove our main theorem.

\begin{mainthm}\label{thm:main-thm}
$\ZF$ is equiconsistent with $\ZF+``$For all pairs of infinite cardinals $\lambda\leq\kappa$, there is a set $X$ such that $\aleph(X)=\lambda$ and $\aleph^*(X)=\kappa$''.
\end{mainthm}

The structure shall be similar to the treatment of class products of symmetric extensions found in, for example, \cite{karagila_fodors_2018}.

We shall begin in a model $V$ of $\ZFC+\GCH$. We shall define inductively a symmetric system $\tup{\bbP_\alpha,\sG_\alpha,\sF_\alpha}$ for each $\alpha\in\Ord$. Each such system will be precisely of the form described in Theorem~\ref{thm:one-step}, and so to fully define each system we need only define the parameters $\lambda_\alpha,\kappa_\alpha$, and $\mu_\alpha$. First, let $\{\tup{\lambda_\alpha,\kappa_\alpha}\mid\alpha\in\Ord\}$ be an enumeration of each pair $\tup{\lambda,\kappa}$ with $\aleph_0\leq\lambda\leq\kappa$, using (for example) the G\"odel pairing function. Then we shall define $\mu_\alpha$ to be the least cardinal satisfying the following conditions:
\begin{enumerate}
\item $\mu_\alpha$ is regular;
\item for all $\beta<\alpha$, $\mu_\beta<\mu_\alpha$;
\item for all $\beta\leq\alpha$, $\kappa_\beta<\mu_\alpha$;
\item setting $\Q$ to be the finite-support product $\prod_{\beta<\alpha}\bbP_\beta$, we require $\abs{\Q}<\mu_\alpha$;
\item for all $\beta<\alpha$, $\1_\Q\forces_{\Q}|\ddV_{\mu_\beta^+}|<\check{\mu}_\alpha$; and
\item $\aleph_\alpha<\mu_\alpha$.
\end{enumerate}
Let $\bbP$ be the finite-support product of all $\bbP_\alpha$, $\sG$ the finite-support product of all $\sG_\alpha$, and $\sF$ the finite-support product of all $\sF_\alpha$. For $E\subseteq\Ord$, we denote by $\bbP\res E$ (respectively $\sG\res E,\sF\res E$) the restriction of $\bbP$ (respectively $\sG,\sF$) to the co-ordinates found in $E$. Since any $\bbP$-name $\ddx$ is a set, it is a $\bbP\res\alpha$-name for some $\alpha$, and so $\ddx$ is hereditarily $\sF$-symmetric if and only if it is hereditarily $\sF\res\alpha$-symmetric for some $\alpha$. Therefore, setting $\HS=\HS_\sF$, $\HS_\alpha=\HS_{\sF\res\alpha}$, and letting $G$ be $V$-generic for $\bbP$, we get that
\begin{equation*}
\bigcup_{\alpha\in\Ord}\HS_\alpha^{G\res\alpha}=\bigcup_{\alpha\in\Ord}\HS_\alpha^G=\left(\bigcup_{\alpha\in\Ord}\HS_\alpha\right)^G=\HS^G.
\end{equation*}
Let $\calM=\HS^G$ and $\calM_\alpha=\HS_\alpha^{G\res\alpha}$. Then we have that $\calM=\bigcup_{\alpha\in\Ord}\calM_\alpha$. We wish to prove that $\calM\vDash\ZF$, and shall use the following theorem, \cite[Theorem~9.2]{karagila_iterating_2019}.

\begin{thm}\label{thm:limit-rank}
Let $\tup{\bbP_\alpha,\sG_\alpha,\sF_\alpha\mid\alpha\in\Ord}$ be a finite-support product of symmetric extensions of homogeneous systems. Suppose that for each $\eta$ there is $\alpha^*$ such that for all $\alpha\geq\alpha^*$, the $\alpha$th symmetric extension does not add new sets of rank at most $\eta$. Then no sets of rank at most $\eta$ are added by limit steps either. In particular, the end model satisfies $\ZF$.\qed
\end{thm}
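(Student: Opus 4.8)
The plan is to deduce that $\calM=\bigcup_{\alpha\in\Ord}\calM_\alpha$ models $\ZF$ from Gödel's criterion: a transitive class containing all ordinals satisfies $\ZF$ exactly when it is closed under the Gödel operations and is \emph{almost universal}, i.e. every subset of the class that happens to be a set is contained in some member of the class. Closure under the Gödel operations is immediate: each operation is finitary, each $\calM_\alpha$ already models $\ZF$ (by the theorem that every symmetric system yields a model of $\ZF$), and finitely many elements of $\calM$ lie in a common $\calM_\alpha$, so the result stays in $\calM_\alpha\subseteq\calM$. Almost universality is where the rank hypothesis enters. If I can show that for every $\eta$ there is $\alpha$ with $V_\eta\cap\calM=V_\eta\cap\calM_\alpha$, then $V_\eta\cap\calM$ is a set lying in $\calM_\alpha\subseteq\calM$; any set $S\subseteq\calM$ has bounded rank, so $S\subseteq V_\eta\cap\calM\in\calM$, giving almost universality. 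Thus the whole theorem reduces to the \emph{rank-stabilisation} statement: for each $\eta$, no new set of rank at most $\eta$ is added beyond some fixed stage. The hypothesis supplies this at single (successor) coordinates, so the real content is the limit stages.

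Second, I would prove rank-stabilisation — for each $\eta$ there is $\alpha^*=\alpha^*(\eta)$ with $V_\eta\cap\calM=V_\eta\cap\calM_{\alpha^*}$ — by induction on $\eta$. Given monotonicity of the chain and eventual constancy at single coordinates (the hypothesis), the only thing that could stop $V_\eta\cap\calM_\beta$ from being constant for $\beta\geq\alpha^*$ is the limit steps, so it suffices to prove continuity there: for a limit $\delta>\alpha^*$, every set of rank at most $\eta$ in $\calM_\delta$ already lies in some $\calM_\gamma$ with $\gamma<\delta$. Fix $\dot x\in\HS_{\sF\res\delta}$ with $p\forces^\HS$ that $\dot x$ has rank at most $\eta$. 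The elements of $\dot x^G$ have rank $<\eta$, and by the induction hypothesis at lower ranks — whose stabilisation thresholds all lie at or below $\alpha^*\leq\delta$ — these have already stabilised below $\delta$; hence $p$ forces that every element of $\dot x$ lies in a single fixed set $C\in\calM_{\gamma_0}$ for some $\gamma_0<\delta$, each member of $C$ having a canonical name in $\HS_{\sF\res\gamma_0}$. This is exactly where the rank bound is indispensable: it confines the candidate elements to one fixed earlier set rather than letting them be spread cofinally in $\delta$.

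Third, the crux is to decide, below $\delta$, which members of $C$ actually belong to $\dot x$. Since $\sF$ is a finite-support product filter, $\sym(\dot x)$ contains a basic group $\prod_\alpha H_\alpha$ with $H_\alpha=\sG_\alpha$ for all but finitely many $\alpha$; let $\gamma<\delta$ bound both this finite symmetry-support and $\gamma_0$. Then $\dot x$, and likewise the canonical name $\dot c$ of each $c\in C$, is fixed by every automorphism acting trivially below $\gamma$, i.e. by the entire tail group over $[\gamma,\delta)$. Invoking homogeneity of the tail system, I would argue that the tail generic cannot affect the truth of $\dot c\in\dot x$: were one tail condition to force it and another to force its negation, a tail automorphism — which fixes both $\dot x$ and $\dot c$ — would, by the Symmetry Lemma, carry the one to the other, contradicting the homogeneity that makes $\1$ decide such statements. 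Hence $\bbP\res\gamma$ already decides $c\in\dot x$ for every $c\in C$, so the membership pattern, and therefore $\dot x^G$ itself, is computed from $G\res\gamma$, giving $\dot x^G\in\calM_\gamma$ and establishing continuity at $\delta$.

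The main obstacle is this tail-invariance argument together with the bookkeeping of the two inductions. Concretely, the hard lemma to nail down is that a tail-invariant hereditarily symmetric name whose candidate values are confined to a fixed ground-side set cannot have its membership facts depend on the tail generic; this is the homogeneity input, and it is the symmetric-product analogue of the fact that a mutually generic homogeneous factor decides no new statements about the base. I would also need to check that the single-coordinate thresholds $\alpha^*(\eta)$ can be chosen monotone in $\eta$, so that all lower-rank stabilisation stages genuinely fall below any limit $\delta>\alpha^*(\eta)$ — routine, but it must be recorded. Everything else is bookkeeping: Gödel's criterion, closure under the Gödel operations, and the successor step, which is simply the hypothesis transported from ``over $V$'' to ``over $\calM_\gamma$'' using mutual genericity and homogeneity of the factors.
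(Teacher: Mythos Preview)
The paper does not prove this theorem: it is imported as \cite[Theorem~9.2]{karagila_iterating_2019}, with the terminal \qed\ signalling that it is quoted without argument. There is therefore no in-paper proof to compare your proposal against.

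That said, your sketch is essentially the standard line for results of this type and is, in outline, how the cited reference proceeds. The reduction to almost-universality via G\"odel's criterion is the right framework; the rank-stabilisation induction is set up correctly; and you correctly isolate the crux as the homogeneity step showing that membership of a tail-invariant $\dot c$ in a tail-invariant $\dot x$ is already decided by $\bbP\res\gamma$. Your phrasing ``carry the one to the other'' should be sharpened: what one actually does is use homogeneity to map one tail condition to a condition \emph{compatible} with the other and then derive a contradiction from the Symmetry Lemma. Two bookkeeping points deserve to be made explicit in a write-up. First, the ``canonical names'' $\dot c$ for $c\in C$ must be chosen as $\bbP\res\gamma_0$-names, so that tail automorphisms genuinely fix them. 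Second, having shown that membership is decided below $\gamma$, you still owe an explicit name in $\HS_{\sF\res\gamma}$ evaluating to $\dot x^G$; this is routine since the deciding conditions, the element-names $\dot c$, and the symmetry support of $\dot x$ all live below $\gamma$, but it should be said. Finally, note that in the paper's intended reading the hypothesis already speaks of the step $\calM_\alpha\to\calM_{\alpha+1}$, so the successor case is immediate and no ``transport from over $V$'' is needed.
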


The conditions of the theorem are also desirable for our construction. We shall show that for all $\alpha$ there is a hereditarily symmetric name $\ddX_\alpha$ such that $\ddX_\alpha\in\HS_{\alpha+1}$ and $\calM_{\alpha+1}\vDash``\aleph(\ddX_\alpha^G)=\lambda_\alpha\text{ and }\aleph^*(\ddX_\alpha^G)=\kappa_\alpha"$. In this case, if we can preserve a large enough initial segment of $\calM_{\alpha+1}$ for the rest of the iteration, then $\ddX^G_\alpha$ will still have this property in $\calM$.

We shall require the following fact, that is proved in \cite[Lemma~2.3]{karagila_fodors_2018}.

\begin{lem}\label{lem:the-weird-round-one}
Let $\kappa$ be a regular cardinal, $\bbP$ a $\kappa$-c.c. forcing, and $\Q$ a $\kappa\text{-distributive}$ forcing. If $\1_\Q\forces``\check{\bbP}$ is $\check{\kappa}$-c.c.'', then $\1_\bbP\forces``\check{\Q}$ is $\check{\kappa}$-distributive''.\qed
\end{lem}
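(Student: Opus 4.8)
The plan is to read the statement as a commutation property between the two orders of iterating $\bbP$ and $\Q$, and to establish it with the product lemma. Fix a $V$-generic filter $G\subseteq\bbP$; it suffices to show that $\Q$ is $\kappa$-distributive in $V[G]$, i.e.\ that forcing with $\Q$ over $V[G]$ adds no new function $f\colon\gamma\to\Ord$ with $\gamma<\kappa$. Let $H$ be $\Q$-generic over $V[G]$ and let $f\in V[G][H]$ be such a function; we must show $f\in V[G]$. By the product lemma, $G\times H$ is $(\bbP\times\Q)$-generic over $V$, so $H$ is $\Q$-generic over $V$, $G$ is $\bbP$-generic over $V[H]$, and $V[G][H]=V[H][G]$. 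If $f\in V[H]$, then $f$ is a ${<}\kappa$-sequence of ordinals in a $\Q$-generic extension of $V$, so $\kappa$-distributivity of $\Q$ in $V$ gives $f\in V\subseteq V[G]$, and we are done. So assume $f\notin V[H]$.

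Now work in $V[H]$, where $\bbP$ is still $\kappa$-c.c.\ by hypothesis, and where $\kappa$ is still regular (since $\Q$, being $\kappa$-distributive in $V$, adds no new ${<}\kappa$-sequence of ordinals, hence no short cofinal sequence in $\kappa$ and no collapse of $\kappa$). Pick a nice $\bbP$-name $\ddf\in V[H]$ for $f$: for each $\alpha<\gamma$ a sequence $\langle A_{\alpha,\xi}\mid\xi\in\Ord\rangle$ of antichains whose union is a maximal antichain deciding $\ddf(\check{\alpha})$, with $p\in A_{\alpha,\xi}$ implying $p\forces\ddf(\check{\alpha})=\check{\xi}$. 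Since $\bbP$ is $\kappa$-c.c.\ in $V[H]$ and $\kappa$ is regular there, the set
\[
N=\{\tup{\alpha,p,\xi}\mid\alpha<\gamma,\ \xi\in\Ord,\ p\in A_{\alpha,\xi}\}
\]
has size ${<}\kappa$, and $N$ is a subset of the ground-model set $\gamma\times\bbP\times\theta$, where $\theta\in\Ord$ bounds the range of $f$. Fixing in $V[H]$ a surjection $e\colon\delta\to N$ with $\delta<\kappa$, the map $e$ is a ${<}\kappa$-sequence of elements of $V$, so by $\kappa$-distributivity of $\Q$ in $V$ we get $e\in V$, whence $N\in V$, and therefore the name $\ddf$ (recovered from $N$ by an absolute procedure) lies in $V$. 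Since $G$ is $\bbP$-generic over $V$ and the evaluation of a name depends only on the name and the filter, $f=\ddf^{G}\in V[G]$, as required.

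As $G$ was an arbitrary $V$-generic filter for $\bbP$, and ``$\check{\Q}$ is $\check{\kappa}$-distributive'' is an assertion about $V[G]$ (with $\kappa$ still regular there, as $\bbP$ is $\kappa$-c.c.\ in $V$), we conclude $\1_\bbP\forces``\check{\Q}$ is $\check{\kappa}$-distributive''. The main obstacle is the cardinal bookkeeping inside $V[H]$: one must confirm that $\kappa$ stays regular there, so that the nice name $\ddf$ really has size ${<}\kappa$ and the coding set $N$ is small enough for the distributivity of $\Q$ in $V$ to bite; the remaining steps are routine uses of the product lemma and the theory of nice names. (If ``$\kappa$-distributive'' is instead taken to mean ``adds no new $\kappa$-sequence'', the same argument goes through with $\gamma\leq\kappa$, since $\kappa\cdot\kappa=\kappa$ keeps $|N|\leq\kappa$.)
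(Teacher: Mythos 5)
Your argument is correct and is essentially the standard proof of this lemma: the paper itself gives no proof but cites \cite[Lemma~2.3]{karagila_fodors_2018}, and your route --- pass to the product $V[G][H]=V[H][G]$, use the $\kappa$-c.c.\ of $\bbP$ in $V[H]$ to code a nice name for $f$ by a ${<}\kappa$-sized subset of a ground-model set, and pull that code into $V$ by the $\kappa$-distributivity of $\Q$ --- is exactly the argument given there. The points you flag (regularity of $\kappa$ in $V[H]$, and that $N$ consists of ground-model elements so distributivity applies) are handled correctly.
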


\begin{prop}\label{prop:prime-directive}
Let $\delta<\beta<\alpha$. Then $\calM_\beta$ and $\calM_\alpha$ agree on sets of rank less than $\mu_\delta^+$.
\end{prop}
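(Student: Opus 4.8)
The plan is to reduce the proposition to a single-step statement and then propagate it through the product with Theorem~\ref{thm:limit-rank}. Fix $\delta$ and an ordinal $\eta<\mu_\delta^+$. The key lemma will be: \emph{for every $\gamma>\delta$, the $\gamma$th symmetric extension (the passage from $\calM_\gamma$ to $\calM_{\gamma+1}$) adds no new set of rank $\leq\eta$.} Granting this, a routine induction on ordinals $>\delta$ finishes the proposition: the limit stages are covered by Theorem~\ref{thm:limit-rank} with $\alpha^*=\delta+1$ (the systems are homogeneous---this is exactly what Claim~\ref{claim:homogeneity} records), so every $\calM_\xi$ with $\xi>\delta$ agrees with $\calM_{\delta+1}$ on sets of rank $\leq\eta$; hence so do $\calM_\beta$ and $\calM_\alpha$, and letting $\eta$ vary below $\mu_\delta^+$ gives the statement.

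To prove the key lemma I would first move to the ambient full generic extensions, where choice is available. Since $\gamma>\delta$ we have $\mu_\delta<\mu_\gamma$, so $\eta<\mu_\delta^+\leq\mu_\gamma$, and clause~(5) in the definition of $\mu_\gamma$---taken with the smaller cardinal equal to $\mu_\delta$---gives that $V_{\mu_\delta^+}$, as computed in $V[G\res\gamma]$, has size $<\mu_\gamma$ there. As $\bbP_\gamma=\Add(\mu_\gamma,\kappa_\gamma\times\lambda_\gamma\times\mu_\gamma)$ is ${<}\mu_\gamma$-closed, forcing with it over $V[G\res\gamma]$ adds no new subset of that set, so a short induction on rank shows that $V[G\res(\gamma+1)]$ and $V[G\res\gamma]$ contain exactly the same sets of rank below $\mu_\delta^+$. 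In particular every set $a$ of rank $\leq\eta$ lying in $\calM_{\gamma+1}\subseteq V[G\res(\gamma+1)]$ already lies in $V[G\res\gamma]$.

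It remains to push such an $a$ into $\calM_\gamma$. By induction on $\eta$ its members lie in $\calM_\gamma$, so $a\subseteq S\defeq V_\eta^{\calM_\gamma}\in\calM_\gamma$; fix a fully symmetric $\dot S\in\HS_\gamma$ naming $S$, of the form $\{\dot s\mid\dot s\in T\}^\bullet$ for a ground-model set $T$ of $\HS_\gamma$-names, and a name $\dot A\in\HS_{\gamma+1}$ for $a$ with $\sym(\dot A)\supseteq H$ for a group $H$ in the filter whose stage-$\gamma$ coordinate is some $H_{I,J,K}$. The plan is to verify that
\[\dot b\defeq\{\tup{p,\dot s}\mid\dot s\in T,\ p\in\bbP\res\gamma,\ p\forces^\HS\dot s\in\dot A\}\]
is a $(\bbP\res\gamma)$-name lying in $\HS_\gamma$ (since $\sym(\dot A)\cap\sG\res\gamma\in\sF\res\gamma$ fixes $\dot b$, by the Symmetry Lemma for $\forces^\HS$) with $\dot b^{G\res\gamma}=a$. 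The only real content in the last equality is that whenever $\dot s^{G}\in a$ there is a condition \emph{in $G\res\gamma$}---not merely in $G$---forcing $\dot s\in\dot A$. For this one uses that the value of $\dot A$ over $V[G\res\gamma]$ is decided by a single $r\in G_\gamma$ (by the ${<}\mu_\gamma$-closure of $\bbP_\gamma$ and the smallness of $S$), and then absorbs $r$ by an automorphism of $\bbP_\gamma$ fixing $\dot A$, exactly as in the homogeneity argument closing the proof of Theorem~\ref{thm:one-step} via Claim~\ref{claim:homogeneity}; the only dependence that homogeneity cannot remove is on the ${<}\lambda_\gamma$ ``$K$-coordinates'' of $G_\gamma$, and that dependence is excluded at rank $\leq\eta$ precisely because $a\in V[G\res\gamma]$ and $\bbP_\gamma$ is ${<}\mu_\gamma$-closed.

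I expect this last reflection step to be the main obstacle. Because the product is taken with \emph{finite} support, the tail $\bbP\res[\beta,\alpha)$ is not ${<}\mu_\delta^+$-closed---indeed it adds a new function $\mu_\delta\to2$---so one genuinely cannot argue at the level of the full generic extensions, and must instead track the symmetric structure carefully. The delicate point is reconciling the homogeneity of each $\bbP_\gamma$, which is only homogeneity relative to the groups $H_{I,J,K}$ and a priori leaves room for $\dot A$ to depend on a small fragment of the stage-$\gamma$ generic, with the requirement that $a$ be invariant under enough automorphisms to land in $\calM_\gamma$; this is where the closure of $\bbP_\gamma$ and clause~(5) are invoked a second time.
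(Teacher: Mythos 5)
Your overall decomposition---reduce to the single successor step and let Theorem~\ref{thm:limit-rank} absorb the limit stages---is exactly the paper's. But there is a genuine gap at the central step. You justify ``forcing with $\bbP_\gamma$ over $V[G\res\gamma]$ adds no new subset of $V_{\mu_\delta^+}^{V[G\res\gamma]}$'' by appealing to the ${<}\mu_\gamma$-closure of $\bbP_\gamma$. That closure holds in $V$ but fails in $V[G\res\gamma]$: the head $\bbP\res\gamma$ is a finite-support product containing factors $\Add(\mu_\xi,\cdot)$ with $\mu_\xi<\mu_\gamma$, so it adds new bounded subsets of $\mu_\gamma$ all of whose proper initial segments lie in $V$; transplanting such a set onto a single column of $\bbP_\gamma$ yields a descending chain of conditions of $\bbP_\gamma$, of length ${<}\mu_\gamma$ and lying in $V[G\res\gamma]$, whose union is a partial function of size ${<}\mu_\gamma$ not in $V$ and hence not a condition. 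So the closure argument breaks. What survives is $\mu_\gamma$-\emph{distributivity}, and this is precisely what Lemma~\ref{lem:the-weird-round-one} is in the paper for: $\bbP_\gamma$ is $\mu_\gamma$-distributive in $V$, clause~(4) gives $\abs{\bbP\res\gamma}<\mu_\gamma$ so that $\bbP\res\gamma$ is (and is forced by $\bbP_\gamma$ to remain) $\mu_\gamma$-c.c., whence $\bbP\res\gamma$ forces $\bbP_\gamma$ to be $\mu_\gamma$-distributive; together with clause~(5) this yields the conclusion you want. You never invoke that lemma, and without it (or an equivalent Easton-style argument) the step is unjustified. Incidentally, your closing remark locates the difficulty in the non-closure of the \emph{tail}; the actual obstruction at a single step is the head $\bbP\res\gamma$.

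On your third paragraph: the descent of $a$ from $V[G\res\gamma]$ into $\calM_\gamma$ is a legitimate point of care, but the paper does not perform any such name-by-name reflection. It concludes directly from $\calM_\beta\subseteq V[G\res\beta]$ and $\calM_{\beta+1}\subseteq V[G\res\beta][G(\beta)]$ together with the fact that $\bbP_\beta$ adds no sets of rank ${<}\mu_\delta^+$ over $V[G\res\beta]$, delegating the remaining symmetric-model bookkeeping to the product-of-symmetric-systems machinery behind Theorem~\ref{thm:limit-rank}. Your proposed name $\dot b$ and the homogeneity absorption via Claim~\ref{claim:homogeneity} are left as a sketch with acknowledged unresolved obstacles, so as written this portion is not a proof either; the cleaner route is to quote the cited machinery rather than rebuild it.
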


\begin{proof}
It is sufficient to prove that for all $\beta\in\Ord$, $\calM_\beta$ and $\calM_{\beta+1}$ agree on sets of rank less than $\mu_\delta$; this is the successor stage for an induction on $\alpha$ of the statement of the Proposition, and Theorem~\ref{thm:limit-rank} provides the induction at the limit stage. Towards this end, let $\delta<\beta\in\Ord$ and $\calN=V[G\res\beta]$. We shall show that $\bbP_\beta$ adds no sets of rank less than $\mu_\delta^+$ to $\calN$, and since $\calM_\beta\subseteq\calN$ and $\calM_{\beta+1}\subseteq\calN[G(\beta)]$, the claim is proved.

Let $\kappa=|V_{\mu_\delta^+}^\calN|$. Then, by the definition of $\mu_\beta$, $\kappa<\mu_\beta$, and so it is sufficient to prove that $\bbP_\beta$ is $\mu_\beta$-distributive. $\bbP_\beta=\Add(\mu_\beta,\kappa_\beta\times\lambda_\beta\times\mu_\beta)^V$, and so certainly in $V$ it is $\mu_\beta^+$-distributive. Furthermore, by definition, $\abs{\bbP\res\beta}<\mu_\beta$, so $\bbP\res\beta$ is $\mu_\beta$-c.c., and indeed $\bbP_\beta\forces``\check{\bbP}\res\beta$ is $\check{\mu}_\beta$-c.c.''. Hence, by Lemma~\ref{lem:the-weird-round-one}, ${\bbP\res\beta\forces``\check{\bbP}_\beta\text{ is }\mu_\beta\text{-distributive}"}$, as required.
\end{proof}

\begin{prop}\label{prop:witness-hartlin}
For all $\alpha\in\Ord$, $\calM_{\alpha+1}\vDash(\exists X_\alpha)(\aleph(X_\alpha)=\lambda_\alpha\leq\kappa_\alpha=\aleph^*(X_\alpha))$.
\end{prop}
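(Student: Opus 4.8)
The plan is to transplant the construction of Theorem~\ref{thm:one-step} onto the single coordinate $\alpha$. By design, $\tup{\bbP_\alpha,\sG_\alpha,\sF_\alpha}$ is exactly the symmetric system built in that theorem for the parameters $\lambda_\alpha\leq\kappa_\alpha$ and $\mu_\alpha$, so it comes equipped with the name $\ddX_\alpha$, together with the auxiliary names $\ddx_{\ab}$ and $\ddy_{\abg}$ and the witnesses $\dde_\eta$ (for $\eta<\kappa_\alpha$) and $\ddm_\eta$ (for $\eta<\lambda_\alpha$) produced there; the indices $\alpha,\beta,\gamma$ in those names are internal to that construction and should not be confused with the iteration stage. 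Reading each of these as a $\bbP\res(\alpha+1)$-name whose only non-trivial coordinate is $\alpha$, the first step is to check that they all lie in $\HS_{\alpha+1}$. An automorphism in $\sG\res(\alpha+1)$ acts on such a name solely through its coordinate-$\alpha$ factor, so the symmetry computations carried out inside the proof of Theorem~\ref{thm:one-step} show that $\sym(\ddX_\alpha)=\sG\res(\alpha+1)$, while $\sym(\ddx_{\ab})$, $\sym(\dde_\eta)$ and $\sym(\ddm_\eta)$ each contain the finite-support product of $\sG\res\alpha$ with the appropriate group $H_{I,J,K}$ placed on coordinate $\alpha$, which is a member of $\sF\res(\alpha+1)$; hereditary symmetry is inherited coordinatewise. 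Thus $\ddX_\alpha$ names a set $X_\alpha\in\calM_{\alpha+1}$.

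Second, I would recover the lower bounds $\aleph(X_\alpha)\geq\lambda_\alpha$ and $\aleph^*(X_\alpha)\geq\kappa_\alpha$. Since $\bbP\res(\alpha+1)$ factors as $(\bbP\res\alpha)\times\bbP_\alpha$ and the names $\ddX_\alpha$, $\dde_\eta$, $\ddm_\eta$ involve only coordinate $\alpha$, the statements ``$\dde_\eta\colon\ddX_\alpha\to\check\eta$ is surjective'' and ``$\ddm_\eta\colon\check\eta\to\ddX_\alpha$ is injective'' are forced by $\1$ over $\bbP\res(\alpha+1)$ for the same reason they are forced by $\1$ over $\bbP_\alpha$ in Theorem~\ref{thm:one-step}. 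As all the relevant names lie in $\HS_{\alpha+1}$, these maps exist in $\calM_{\alpha+1}$, giving both inequalities.

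The substantive step is to sharpen these to equalities, i.e. to verify that the homogeneity argument of Theorem~\ref{thm:one-step} is unaffected by the other coordinates. Suppose $\ddf\in\HS_{\alpha+1}$ and $p\in\bbP\res(\alpha+1)$ with $p\forces\ddf\colon\ddX_\alpha\to\check\kappa_\alpha$ (the case $\check\lambda_\alpha\to\ddX_\alpha$ being analogous). Pick $H\leq\sym(\ddf)$ with $H\in\sF\res(\alpha+1)$; because $\sF\res(\alpha+1)$ is the finite-support product filter, $H$ contains a generating subgroup whose coordinate-$\alpha$ factor is some $H_{I,J,K}$ with $I,J,K$ appropriate, and hence the subgroup of automorphisms that act trivially below $\alpha$ and via $H_{I,J,K}$ on coordinate $\alpha$ is contained in $\sym(\ddf)$. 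In particular, all the transposition automorphisms used in the proof of Theorem~\ref{thm:one-step}, now extended by the identity on the remaining coordinates, lie in $\sym(\ddf)$; and for such a $\pi$ the condition $\pi q$ agrees with $q$ everywhere except on coordinate $\alpha$, so ``$\pi q\comp q$'' is precisely the single-coordinate statement supplied by Claim~\ref{claim:homogeneity}. The argument of Theorem~\ref{thm:one-step} then goes through verbatim, showing that $\ddf$ assumes fewer than $\kappa_\alpha$ values (respectively, that $\ddf``\check\lambda_\alpha\subseteq\{\ddx_{\ab}\mid\gab\in J\}^\bullet$, so $\ddf$ is not injective). Hence $\1\forces^{\HS_{\alpha+1}}``\aleph(\ddX_\alpha)=\check\lambda_\alpha\text{ and }\aleph^*(\ddX_\alpha)=\check\kappa_\alpha"$, which yields $\calM_{\alpha+1}\vDash(\exists X_\alpha)(\aleph(X_\alpha)=\lambda_\alpha\leq\kappa_\alpha=\aleph^*(X_\alpha))$.

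I expect the main obstacle to be purely organisational: one must argue cleanly that a hereditarily $\sF\res(\alpha+1)$-symmetric name $\ddf$ has a symmetry group whose intersection with the automorphisms supported on coordinate $\alpha$ contains a full group $H_{I,J,K}$, and that none of the compatibility and forcing computations in Theorem~\ref{thm:one-step} see the other factors of the product. A possibly tidier alternative is to first record the standard product-decomposition fact identifying $\calM_{\alpha+1}$ with the symmetric extension of $\calM_\alpha$ by $\tup{\bbP_\alpha,\sG_\alpha,\sF_\alpha}$, after which Theorem~\ref{thm:one-step} applies almost word for word; the cost is then to justify that identification, which is where the real care lies.
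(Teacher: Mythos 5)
Your argument is essentially correct, but it takes a genuinely different route from the paper. You rerun the whole symmetry argument of Theorem~\ref{thm:one-step} directly for $\bbP\res(\alpha+1)$-names: extend each coordinate-$\alpha$ automorphism by the identity elsewhere, observe that $\pi q$ and $q$ can only disagree at coordinate $\alpha$ so that Claim~\ref{claim:homogeneity} still delivers $\pi q\comp q$, and repeat the counting/contradiction arguments. This works (the finite-support product filter does give you a full $H_{I,J,K}$ sitting inside $\sym(\ddf)$ on coordinate $\alpha$, and the pairwise distinctness of the $\ddx_\ab$ survives since $G(\alpha)$ is still $V$-generic), but it forces you to re-examine every density computation in the presence of the other factors. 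The paper instead only proves the lower bounds inside $\calM_{\alpha+1}$ (via the explicit witnesses $\dde_\eta$, $\ddm_\eta$, exactly as you do) and gets the upper bounds for free from an inclusion of models: by the product lemma $G(\alpha)$ is $V[G\res\alpha]$-generic, so one can form the one-step symmetric extension $\calN$ of $V[G\res\alpha]$ by $\tup{\bbP_\alpha,\sG_\alpha,\sF_\alpha}$, note $\calM_{\alpha+1}\subseteq\calN$, and apply Theorem~\ref{thm:one-step} with ground model $V[G\res\alpha]$; any offending injection or surjection in $\calM_{\alpha+1}$ would already live in $\calN$, which is impossible. Note that this is \emph{not} the ``tidier alternative'' you propose at the end --- the paper never identifies $\calM_{\alpha+1}$ as a symmetric extension of $\calM_\alpha$ (which, as you rightly worry, is where the real care would lie); it only needs the easy inclusion $\calM_{\alpha+1}\subseteq\calN$ with $\calN$ a symmetric extension of the full generic extension $V[G\res\alpha]$. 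The trade-off: the paper's route is much shorter but quietly relies on Theorem~\ref{thm:one-step} being applicable over the ground model $V[G\res\alpha]$ even though the system was built in $V$ (harmless here, since $|\bbP\res\alpha|<\mu_\alpha$ keeps $\mu_\alpha$ regular and the relevant cardinals intact), whereas your route is self-contained over $V$ at the cost of redoing the homogeneity analysis in the product.
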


\begin{proof}
Let $\calN$ be the symmetric extension of $V[G\res\alpha]$ given by the symmetric system $\tup{\bbP_\alpha,\sG_\alpha,\sF_\alpha}$ and the generic filter $G(\alpha)$. By the usual arguments concerning product forcing, $G(\alpha)$ is $V[G\res\alpha]$-generic for $\bbP_\alpha$ whenever $G\res\alpha+1$ is $V$-generic for $\bbP\res\alpha+1$. Therefore, we have that $\calM_{\alpha+1}\subseteq\calN\subseteq V[G\res\alpha+1]$ are a chain of transitive subclasses, and $V[G\res\alpha]$ is a transitive subclass of $\calN$ as well.

Let $X_\alpha\in\calM_{\alpha+1}$ be the realisation of the name $\ddX$ exhibited in Theorem~\ref{thm:one-step}. Since $\ddX$ is a $\bbP_\alpha$-name it is equivalent to its realisation under $G\res\alpha+1$. Similarly, the realisations of $\dde_\eta$ and $\ddm_\eta$ for appropriate values of $\eta$ are still the desired functions, and so $\calM_{\alpha+1}\vDash``\aleph(X_\alpha)\geq\lambda_\alpha\text{ and }\aleph^*(X_\alpha)\geq\kappa_\alpha"$.

Theorem~\ref{thm:one-step} required no assumptions about the ground model other than $\ZFC$, and so $\calN$, in its role as a symmetric extension of $V[G\res\alpha]$, must satisfy $\aleph(X_\alpha)=\lambda_\alpha$ and $\aleph^*(X_\alpha)=\kappa_\alpha$. Therefore, since any function $f\in\calM_{\alpha+1}$ is also in $\calN$, we cannot have that $f$ is an injection $\lambda_\alpha\to X_\alpha$ or a surjection $X_\alpha\to\kappa_\alpha$. Hence, $\calM_{\alpha+1}$ satisfies $\aleph(X_\alpha)=\lambda_\alpha$ and $\aleph^*(X_\alpha)=\kappa_\alpha$ as well.
\end{proof}

With some care over the construction of $X_\alpha$, we may now prove our main theorem.

\begin{mainthm}
$\ZF$ is equiconsistent with $\ZF+$``for all infinite cardinals $\lambda\leq\kappa$ there is a set $X$ such that $\aleph(X)=\lambda\leq\kappa=\aleph^*(X)"$.
\end{mainthm}

\begin{proof}
Firstly, by Proposition~\ref{prop:prime-directive} and Theorem~\ref{thm:limit-rank}, $\calM\vDash\ZF$. Secondly, by Proposition~\ref{prop:witness-hartlin}, for all $\alpha\in\Ord$, $\calM_{\alpha+1}\vDash``\aleph(X_\alpha)=\lambda_\alpha\text{ and }\aleph^*(X_\alpha)=\kappa_\alpha"$. Note that, for all $\alpha\in\Ord$, the set $X_\alpha$ is constructed as an element of $\power^3(\mu_\alpha)$. Since $\mu_\alpha>\kappa_\alpha\geq\lambda_\alpha$, it must be the case that any function $\lambda_\alpha\to X_\alpha$ or $X_\alpha\to\kappa_\alpha$ must have rank less than $\mu_\alpha^+$. Hence, by Proposition~\ref{prop:prime-directive}, we have that for all $\alpha<\beta$, $\calM_\beta\vDash``\aleph(X_\alpha)=\lambda_\alpha\text{ and }\aleph^*(X_\alpha)=\kappa_\alpha"$. This, combined with Theorem~\ref{thm:limit-rank}, shows that for all $\alpha\in\Ord$, $\calM\vDash``\aleph(X_\alpha)=\lambda_\alpha\text{ and }\aleph^*(X_\alpha)=\kappa_\alpha"$.
\end{proof}

\section{Open questions}

\begin{qn}
What are the limitations of the spectrum of Hartogs--Lindenbaum pairs in arbitrary models of $\ZF$?
\end{qn}
In other words, considering $\{\tup{\lambda,\kappa}\mid\exists X,\alpha(X)=\lambda,\aleph^*(X)=\kappa\}$ in a fixed model of $\ZF$, what are the limitations on this class? Clearly, one requirement is that $\lambda\leq\kappa$. We can also deduce that if this spectrum is exactly $\lambda=\kappa$, then $\AC_\WO$ holds. Is there anything better that we can say about the spectrum or deduce from its properties?

In \cite{ryan-smith_acwo_2024}, the spectrum of models of $\SVC$ are studied and classified into various well-behaved subclasses. For example, if $\calM\vDash\SVC$ then there is a cardinal $\kappa$ such that for all $X$, if $\aleph^*(X)\geq\kappa$ then $\aleph^*(X)$ is a successor cardinal.

\begin{qn}
What type of forcing notions preserve all Hartogs--Lindenbaum values from the ground model?
\end{qn}
Assuming $\ZFC$ holds, the above question has a simple answer: any cardinal preserving forcing will suffice. More generally, in the case of Hartogs's number, any ${<}\Ord$-distributive forcing will not add any injections from an ordinal into a ground model set. So, for example, any $\sigma$-distributive forcing must preserve Dedekind-finiteness. But what about preservations of Lindenbaum numbers as well?

\section{Acknowledgements}

The authors would like to thank Carla Simons for her help and encouragements, as well as Andrew Brooke-Taylor for his comments on an early version of this manuscript. The authors would also like to thank the reviewer for their work in checking this manuscript.

\providecommand{\bysame}{\leavevmode\hbox to3em{\hrulefill}\thinspace}
\providecommand{\MR}{\relax\ifhmode\unskip\space\fi MR }
\providecommand{\MRhref}[2]{%
  \href{http://www.ams.org/mathscinet-getitem?mr=#1}{#2}
}
\providecommand{\href}[2]{#2}

\end{document}